
\documentclass{birkjour}
%
%
%
 \newtheorem{thm}{Theorem}[section]
 
 \newtheorem{lem}[thm]{Lemma}
 
 \theoremstyle{definition}
 \newtheorem{defn}[thm]{Definition}
 \theoremstyle{remark}
 \newtheorem{rem}[thm]{Remark}
 
 \numberwithin{equation}{section}


\usepackage{amssymb}
\usepackage{graphicx}
\usepackage{amsmath}
\usepackage{latexsym}
\usepackage{pstricks}
\usepackage{graphics,color}
\usepackage{color}
\usepackage{mathrsfs}

\newcommand{\om}{\Omega}
\newcommand{\pom}{\partial\Omega}

\newcommand{\bfK}{\mbox{\boldmath{$K$}}}

\newcommand{\bfx}{\mbox{\boldmath{$x$}}}
\newcommand{\bfn}{\mbox{\boldmath{$n$}}}

\newcommand{\bfF}{\mbox{\boldmath{$F$}}}
\newcommand{\bfg}{\mbox{\boldmath{$g$}}}

\newcommand{\bfz}{\mbox{\boldmath{$z$}}}
\newcommand{\bfh}{\mbox{\boldmath{$h$}}}
\newcommand{\bfu}{\mbox{\boldmath{$u$}}}
\newcommand{\bfw}{\mbox{\boldmath{$w$}}}
\newcommand{\bfv}{\mbox{\boldmath{$v$}}}

\newcommand{\bfB}{\mbox{\boldmath{$B$}}}
\newcommand{\bfR}{\mbox{\boldmath{$R$}}}

\newcommand{\bfe}{\mbox{\boldmath{$e$}}}

\newcommand{\bfchi}{\mbox{\boldmath{$\chi$}}}

\newcommand{\bfphi}{\mbox{\boldmath{$\phi$}}}

\newcommand{\bfpsi}{\mbox{\boldmath{$\psi$}}}
\newcommand{\bfxi}{\mbox{\boldmath{$\xi$}}}

\newcommand{\bfvarphi}{\mbox{\boldmath{$\varphi$}}}

\begin{document}

%
%
%
%
%
%
%
%
%

\title[Doubly Nonlinear Parabolic Systems]
 {A Note on Doubly Nonlinear Parabolic \\
 Systems with Unilateral Constraint}

\author[Michal Bene\v{s}]{Michal Bene\v{s}}
\address{%
Czech Technical University in Prague\\
Th\'{a}kurova 7\\
166 29 Prague 6\\
Czech Republic}

\email{benes@mat.fsv.cvut.cz}


\subjclass{Primary 35K40; Secondary 35K85}

\keywords{Second-order parabolic systems; Unilateral problems and
variational inequalities for parabolic PDE}

\date{January 1, 2004}

\begin{abstract}
We prove the existence and uniqueness of the solution to the doubly
nonlinear parabolic systems with mixed boundary conditions. Due to
the unilateral constraint the problem comes as a variational
inequality. We apply the penalty method and Gronwall's technique to
prove the existence and uniqueness of the variational solution.
\end{abstract}

\maketitle

\section{Introduction}
Let $\Omega$ be a bounded domain in $\mathbb{R}^N$, $N=1,2$ or $3$,
with a smooth boundary $\partial\Omega$ for $N=2$ or $N=3$. Let
$\Gamma_1$, $\Gamma_2$ and $\Gamma_3$ be open disjoint subsets of
$\Gamma=\pom$ (not necessarily connected) such that $\Gamma =
\overline{\Gamma_1}\cup\overline{\Gamma_2}\cup\overline{\Gamma_3}$
and $\textmd{meas}_{N-1}(\Gamma_i)>0$ for $i=1,2,3$. For a positive
$T$ we denote $Q_T=\Omega\times(0,T)$,
$S_T=\partial\Omega\times(0,T)$. $T$ is supposed to be fixed
throughout the paper. We study the following system ($j=1,\dots,m$)
\begin{align}
\partial_t B^j(\bfu)-\nabla\cdot\left(K^{ji}(\bfu)
\nabla u^i+\bfe^j(\bfu)\right) &=
F^j(\bfx,t,\bfu) &&\textmd{ in }\; Q_T,
\label{eq1}
\\
\bfu(\bfx,0)&=\bfu_0(\bfx) &&\textmd{ in }\Omega,
\label{eq2}
\\
\bfu&={\bf0}  &&\textmd{ on } \Gamma_1 \times (0,T),\label{eq3}
\\
\left(K^{ji}(\bfu)\nabla
u^i+\bfe^j(\bfu)\right)\cdot\bfn&=g^j(\bfx,t,\bfu)&&\textmd{ on }
\Gamma_2 \times (0,T), \label{eq4}
\end{align}
\begin{equation}\label{eq5}
\left. \; \quad \quad
\begin{array}{c}
\begin{tabular}{rl}
$u^j \leq 0$&
\\
$\left(K^{ji}(\bfu) \nabla u^i+\bfe^j(\bfu)\right)\cdot\bfn\leq 0 $&
\\
$u^j \left[\left(K^{ji}(\bfu) \nabla
u^i+\bfe^j(\bfu)\right)\cdot\bfn \right]=0$
\\
\end{tabular}
\end{array}\right\}
\qquad  \textmd{  on  }  \Gamma_3\times (0,T).
\end{equation}
In  \eqref{eq1}--\eqref{eq5}, $\bfn$ denotes the outer unit normal
to $\partial\Omega$, $\bfu=(u^1,\dots,u^m)$ represents the unknown
fields of state variables, the vector $\bfu_0=(u^1_0,\dots,u^m_0)$
describes the initial condition. By $\bfB$, $\bfK^j$
($j=1,\dots,m$), $\bfe^j$, $\bfF$, $\bfg$, we denote the vectors
$\bfB=(B^1,\dots,B^m)$, $\bfK^j=(K^{j1},\dots,K^{jm})$,
$\bfe^j=(e^{j}_1,\dots,e^{j}_N)$, $\bfF=(F^1,\dots,F^m)$,
$\bfg=(g^1,\dots,g^m)$, which are smooth functions of primary
unknowns $\bfu$. Hence, the problem is strongly nonlinear.

Systems of equations like \eqref{eq1}--\eqref{eq5} arise in a
variety of physical situations. For example, they describe the
evolution of the dual water flow through porous media (cf.
\cite{gerkegenuchten}) and, for instance, heat and moisture transfer
in porous structures (see \cite{KuKi1997}).

A considerable effort has been invested into qualitative properties
of scalar problems with $m=1$  (cf.
\cite{Diaz1996,Eden1990,Eden1994,ivanov,zeman1991}). However, much
less attention has been given to the qualitative properties of
systems for doubly nonlinear equations of type \eqref{eq1}. The
global existence of weak solutions to \eqref{eq1}--\eqref{eq2} in
bounded domains subject to mixed Dirichlet-Neumann boundary
conditions has been shown by Alt \& Luckhaus in
\cite{AltLuckhaus1983} assuming the function $B^j$ to be monotone
and $\bfg\equiv{\bf0}$. This result has been extended in various
different directions
\cite{FiloKacur1995,Kacur1997,Kacur1990a,Kacur1990b}. For instance,
Filo \& Ka\v{c}ur~\cite{FiloKacur1995} proved the local existence of
the weak solution for the system with nonlinear Neumann boundary
conditions and under more general growth conditions on
nonlinearities in $\bfu$. The uniqueness of the solution has been
proven in \cite{AltLuckhaus1983} under the additional assumption
$\partial_t B^j(\bfu)\in L^1$ and assuming the elliptic term in the
form $\left(K^{ji}(\bfx) \nabla u^i+\bfe^j(\bfu)\right)$. In
\cite{hachimi2001}, El Ouardi \& El Hachimi proved the existence of
the regular attractor for Dirichlet problem to nonlinear parabolic
systems with Laplacian in the elliptic part of the problem. In
\cite{hachimi2006}, the same authors proved the existence of
solutions for doubly nonlinear systems including the
\emph{p}-Laplacian
 as the principal part of the operator considering the
Dirichlet boundary conditions on the whole part of the domain.

In the present paper we prove the existence and uniqueness of the
variational solution to the doubly nonlinear parabolic system
\eqref{eq1}--\eqref{eq4} including the unilateral constraint
\eqref{eq5}. We adapt ideas presented by Filo \&
Ka\v{c}ur~\cite{FiloKacur1995} to extend their results to
variational inequalities. This paper is organized as follows. In
Sections \ref{notations}, \ref{Structure and data properties} and
\ref{Auxiliary_results}, we introduce basic notations, specify
structure conditions and assumptions on data in the problem and
recall some important auxiliary results needed below. In Section
\ref{variational solution}, we formulate our problem as the
variational inequality and reformulate the solved problem in the
operator form in appropriate function spaces. The main results, the
existence and uniqueness of the variational solution, are proved in
Sections \ref{sec:main_result_existence} and
\ref{sec:main_result_uniqueness} via the penalty method and
Gronwall's technique.

\section{Preliminaries}
\subsection{Notations}
\label{notations}
Vectors, vector functions and operators acting on vector functions
are denoted by~boldface letters. Unless specified otherwise, we use
Einstein summation convention for indices running from $1$ to $m$.
Throughout the paper, we will always use positive constants $C$,
$c$, $c_1$, $c_2$, $\dots$, which are not specified and which may
differ from line to line.

For an arbitrary $p\in [1,+\infty]$, $L^p(\Omega)$ denotes the usual
Lebesgue space equipped with the norm $\|\cdot\|_{L^p(\Omega)}$, and
$W^{k,p}(\Omega)$, $k\geq 0$ ($k$ need not to be an integer, see
\cite{KufFucJoh1977}),  denotes the usual Sobolev space with the
norm $\|\cdot\|_{W^{k,p}(\Omega)}$. Let
\begin{displaymath}
{E}:=\left\{\bfu\in C^\infty(\overline{\Omega})^m;\;
 \, {\textmd{supp}\, \bfu}  \cap \Gamma_1 =
\emptyset \right\}
\end{displaymath}
and $\mathbb{V}$ be a closure of $E$ in the norm of
$W^{1,2}(\om)^m$. By $\langle \cdot,\cdot \rangle$ we denote the
duality between $\mathbb{V}$ and $\mathbb{V}^*$.

\subsection{Structure and data properties}
\label{Structure and data properties}

Next we introduce our assumptions on the functions in
\eqref{eq1}--\eqref{eq5}:
\begin{itemize}

\item[(A1)] there is a strictly convex $C^1$-function
$\Phi:\mathbb{R}^m\rightarrow \mathbb{R}$, $\Phi({\bf0})=0$,
$\nabla\Phi({\bf0})={\bf0}$, such that
\begin{equation}\label{conH1}
\bfB(\bfz)=\nabla\Phi(\bfz) \quad \forall  \bfz \in \mathbb{R}^m.
\end{equation}
The Legendre transform $\Psi(\bfz):=
\int_{0}^{1}(\bfB(\bfz)-\bfB(s\bfz))\cdot \bfz \, {\rm d}s$
satisfies
\begin{equation}\label{conH2}
\Psi(\bfz)\geq c_1|\bfz|^{\nu+1}-c_2 \quad (\nu>0)\;  \forall
\bfz\in \mathbb{R}^m;
\end{equation}

\item[(A2)] $K^{ji}:\mathbb{R}^m\rightarrow \mathbb{R}$ and
$\bfe^j:\mathbb{R}^m\rightarrow \mathbb{R}^N$ are continuous and
($i,j=1,\dots,m$ and $k=1,\dots,N$)
\begin{equation}\label{con_9a}
| K^{ji}(\bfz) | + | e_k^j(\bfz) | \leq c  \quad \forall  \bfz \in
\mathbb{R}^m.
\end{equation}
$(K^{ji})$ is a positive-definite matrix satisfying
\begin{displaymath}\label{con_9b}
K^{ji}(\bfz)\xi^i\xi^j \geq c|\bfxi|^2 \quad \forall \bfxi, \bfz \in
\mathbb{R}^m;
\end{displaymath}

\item[(A3)] the functions $\bfF:Q_T\times\mathbb{R}^m\rightarrow\mathbb{R}^m$ and
$\bfg:\Gamma_2 \times (0,T)\times\mathbb{R}^m\rightarrow
\mathbb{R}^m$ are continuous and
\begin{equation}\label{con_11}
\left\{
\begin{array}{rl}
|\bfF(\bfx,t,\bfz)|\leq c(|\bfz|^p+1),& \forall \bfz\in
\mathbb{R}^m,\; [\bfx,t]\in Q_T,
\\
|\bfg(\bfx,t,\bfz)|\leq c(|\bfz|^{\alpha}+1),& \forall \bfz\in
\mathbb{R}^m,\; [\bfx,t]\in \Gamma_2 \times (0,T);
\end{array}\right.
\end{equation}

\item[(A4)] assume $p\leq \nu$ and that either one of the following
conditions is satisfied:

\noindent (i)  $0<\alpha \leq \min\left\{ \nu,1 \right\}$

\noindent (ii) $1<\alpha<(N+\alpha+1)/N$ and

\begin{displaymath}
\alpha < \left\{
\begin{array}{ll}
{(\nu+1)}/{2} & \textmd{for } N=1 , \\
{(3\nu+1)}/{(3+\nu)} & \textmd{for }N=2, \\
\nu+2-\sqrt{\nu^2-\nu+3} & \textmd{for }N=3; \\
\end{array} \right.
\end{displaymath}

\item[(A5)] for initial data we assume $\bfu_0\in
W^{1,2}(\Omega)^m$ and $\bfu_0\cdot\bfB(\bfu_0)\in L^1(\Omega)$.

\end{itemize}


\subsection{Auxiliary results}
\label{Auxiliary_results}

Due to the trace theorem \cite{KufFucJoh1977} the trace mapping
$\mathfrak{T}:W^{1,2}(\om)\rightarrow L^q(\partial\Omega)$, $q\geq
1$ for $N=1,2$ and $1\leq q \leq 4$ for $N=3$, is continuous, i.e.
there exists a constant $c_{tr}$ such that
\begin{equation}\label{trace_th}
\|v\|_{L^q(\partial\Omega)} \leq c_{tr}\|v\|_{W^{1,2}(\om)} \textmd{
for all } v\in W^{1,2}(\om).
\end{equation}
Let (A4) be satisfied. Then (see \cite[Corollary 2]{FiloKacur1995})
\begin{equation}\label{interpol_ineq_gen}
 \int_{\partial\om} |v|^{\alpha+1} {\rm d}\Gamma
 \leq \eta   \|v\|^2_{W^{1,2}(\om)}
  + C(\eta)\int_{\om} |v|^{\nu+1} {\rm d}\bfx
 \; \textmd{ for all }  v\in W^{1,2}(\om).
\end{equation}

The following assertion is proved in \cite[Lemma 2 and
3]{FiloKacur1995}: let $\left\{ \bfw_k
\right\}_{k=1}^{\infty}\subset L^2(0,T;\mathbb{V})\cap
L^{\infty}(Q_T)^m$ and
\begin{displaymath}
\|\bfw_k\|_{L^2(0,T;\mathbb{V})}+\|\bfw_k\|_{L^{\infty}(Q_T)^m}<C,
\; k=1,2,\dots
\end{displaymath}
Moreover, let $\bfw_k\rightarrow \bfw$ a.e. on $Q_T$. Then
\begin{equation}\label{strong_conv_th}
\left\{
\begin{array}{rll}
\bfw_k \rightarrow \bfw & \;{\rm in}\; L^{q+1}(Q_T)^m, & 0\leq q
<p^*,
\\
\bfw_k \rightarrow \bfw& \;{\rm in}\; L^{s+1}(S_T)^m, &
0<s<(N+\min\left\{s,\nu\right\}+1)/N.
\end{array}\right.
\end{equation}

\section{The variational solution, existence and uniqueness}

\subsection{Variational solution}\label{variational solution}

Let us define the closed and convex set
\begin{equation}\label{konvex_K}
\mathcal{K}:=\left\{ \bfv \in \mathbb{V}; \;  v^j \leq 0 \textmd{
a.e. on }
 \Gamma_3, \; j=1,\dots,m \right\}.
\end{equation}

\begin{defn}
A vector function $\bfu\in L^2(0,T;\mathcal{K})\cap
L^{\infty}(0,T;L^{\nu+1}(\Omega)^m)$ with ${\partial_t\bfB(\bfu)}\in
L^2(0,T;\mathbb{V}^*)$, $\bfB(\bfu)\in L^1(Q_T)^m$, is a variational
solution to the system \eqref{eq1}--\eqref{eq5} iff

\begin{itemize}

\item[(i)]
\begin{multline}\label{eq41}
\int_0^T\langle{\partial_t \bfB(\bfu)},\bfvarphi-\bfu\rangle{\rm d}t
+ \int_{Q_T}\left( K^{ji}(\bfu)\nabla u^i+\bfe^j(\bfu)\right)
\cdot\nabla(\varphi^j-u^j) \;{\rm d}Q_T
\\
\geq
\int_0^T\!\!\!\int_{\Gamma_2}\bfg(\bfx,t,\bfu)\cdot(\bfvarphi-\bfu)
\;{\rm d}S_T+\int_{Q_T}\bfF(\bfx,t,\bfu)\cdot(\bfvarphi-\bfu)\;{\rm
d}Q_T
\end{multline}
holds for all $\bfvarphi \in L^2(0,T;\mathcal{K})\cap
L^{\infty}(Q_T)^m$ and $\bfu(0)=\bfu_0$ in $\Omega$;

\item[(ii)]
\begin{displaymath}
\int_0^T\langle{\partial_t \bfB(\bfu)},\bfv\rangle{\rm d}t= -
\int_{Q_T}(\bfB(\bfu)-\bfB(\bfu_0))\cdot\partial_t \bfv \;{\rm d}Q_T
\end{displaymath}
for all $\bfv \in L^{2}(0,T;\mathbb{V})\cap L^{\infty}(Q_T)^m$ with
$\partial_t\bfv\in L^{\infty}(Q_T)^m$, $\bfv(T)={\bf0}$.

\end{itemize}
\end{defn}

\begin{defn}
Define an operator $\mathscr{T}$,
\begin{displaymath}
\mathscr{T} : \left\{ \bfpsi; \, \bfpsi \in L^2(0,T;\mathbb{V}),
{\partial_t\bfB(\bfpsi)} \in L^2(0,T;\mathbb{V}^*) \right\}
\rightarrow L^2(0,T;\mathbb{V}^*),
\end{displaymath}
given by the equation
\begin{multline}\label{operator_T}
\int_{Q_T}\!\!\langle\mathscr{T}(\bfpsi),\bfv \rangle \;{\rm d}t =
\int_0^T\!\!\langle{\partial_t \bfB(\bfpsi)},\bfv\rangle{\rm d}t +
\int_{Q_T}\!\!\left( K^{ji}(\bfpsi)\nabla
\psi^i+\bfe^j(\bfpsi)\right) \cdot\nabla v^j \;{\rm d}Q_T
\\
-\int_0^T\!\!\!\int_{\Gamma_2}\bfg(\bfx,t,\bfpsi)\cdot \bfv \;{\rm
d}S_T - \int_{Q_{T}} \bfF(\bfx,t,\bfpsi)\cdot\bfv \;{\rm d}Q_T
\end{multline}
for all $\bfv \in L^2(0,T;\mathbb{V})\cap L^{\infty}(Q_T)^m$.
\end{defn}

\begin{rem}
If $\bfu$ is the variational solution to the system
\eqref{eq1}--\eqref{eq5} then the inequality \eqref{eq41} can be
replaced by
\begin{equation}
\int_{Q_T}\!\!\langle\mathscr{T}(\bfu),\bfvarphi-\bfu \rangle \;{\rm
d}t \geq 0
\end{equation}
for all $\bfvarphi \in L^2(0,T;\mathcal{K})\cap L^{\infty}(Q_T)^m$.
\end{rem}

\subsection{The existence of the solution}
\label{sec:main_result_existence}

\begin{thm}\label{main_result_existence}
Let the assumptions {\rm (A1)--(A5)} be satisfied. Then there exists
the variational solution to \eqref{eq1}--\eqref{eq5}.
\end{thm}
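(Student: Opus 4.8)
The plan is to establish existence via the penalty method, following the strategy of Filo \& Ka\v{c}ur but carrying the unilateral constraint through as a penalty term. First I would introduce, for $\eps>0$, the penalized operator $\mathscr{T}_\eps$ obtained from $\mathscr{T}$ by adding the monotone penalization $\tfrac{1}{\eps}\int_{Q_T}\langle\bfbeta(\bfu),\bfv\rangle\,{\rm d}t$, where $\beta^j(\bfz)=(z^j)^+$ acts on the $\Gamma_3$-trace (so $\bfbeta:\mathbb{V}\to\mathbb{V}^*$ is bounded, monotone and hemicontinuous, and vanishes exactly on $\mathcal{K}$). The penalized problem $\mathscr{T}_\eps(\bfu_\eps)=\bfzero$ with $\bfu_\eps(0)=\bfu_0$ is then a doubly nonlinear \emph{equation} of exactly the Alt--Luckhaus/Filo--Ka\v{c}ur type, with a nonlinear Neumann-type term $\bfg$ on $\Gamma_2$ augmented by $-\tfrac{1}{\eps}\bfbeta$ on $\Gamma_3$; I would solve it by Galerkin approximation (in a basis of $E$), exactly as in \cite{FiloKacur1995}, obtaining $\bfu_\eps\in L^2(0,T;\mathbb{V})\cap L^\infty(0,T;L^{\nu+1}(\Omega)^m)$ with $\partial_t\bfB(\bfu_\eps)\in L^2(0,T;\mathbb{V}^*)$.

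Next comes the a priori estimates, which is where assumptions (A1)--(A5) do the real work. Testing the penalized equation with $\bfu_\eps$ itself, the term $\partial_t\bfB(\bfu_\eps)$ paired with $\bfu_\eps$ produces, via the convex-duality identity built into the Legendre transform $\Psi$ in (A1), the bound $\int_\Omega\Psi(\bfu_\eps(t))\,{\rm d}\bfx$ plus the dissipation $\int_{Q_t}K^{ji}(\bfu_\eps)\nabla u_\eps^i\cdot\nabla u_\eps^j$, which by the coercivity in (A2) controls $\|\nabla\bfu_\eps\|_{L^2(Q_T)}$; the lower bound \eqref{conH2} on $\Psi$ upgrades this to an $L^\infty(0,T;L^{\nu+1})$ bound. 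The forcing terms $\bfF$ and $\bfg$ are absorbed using the growth conditions \eqref{con_11}, the interpolation inequality \eqref{interpol_ineq_gen} (this is precisely why (A4) is imposed — it guarantees the boundary term $\int_{\Gamma_2}|\bfu_\eps|^{\alpha+1}$ is dominated by $\eta\|\bfu_\eps\|_{W^{1,2}}^2$ plus the $L^{\nu+1}$ term already under control), and a Gronwall argument using $p\le\nu$. Crucially the penalty term $\tfrac{1}{\eps}\int_{S_T}\bfbeta(\bfu_\eps)\cdot\bfu_\eps=\tfrac{1}{\eps}\int_{\Gamma_3}|(u_\eps^j)^+|^2$ has a \emph{good sign} and so contributes a further nonnegative quantity on the left, giving the extra bound $\int_{\Gamma_3}|(u_\eps^j)^+|^2\le C\eps$ with $C$ independent of $\eps$. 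From the equation one then reads off $\|\partial_t\bfB(\bfu_\eps)\|_{L^2(0,T;\mathbb{V}^*)}\le C$ uniformly (the $\tfrac1\eps\bfbeta(\bfu_\eps)$ term is the only delicate one, but it is estimated in $\mathbb{V}^*$ by $\tfrac1\eps\|\bfbeta(\bfu_\eps)\|_{L^2(\Gamma_3)}\le\tfrac1\eps\cdot C\sqrt\eps\to\infty$ — so in fact I must be more careful: instead I bound $\tfrac1\eps\bfbeta(\bfu_\eps)$ in $L^2(0,T;\mathbb{V}^*)$ directly from the equation as the difference of all the other terms, each of which is already bounded, which is legitimate and is the standard trick).

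Then I pass to the limit $\eps\to0$. The uniform bounds give, up to a subsequence, $\bfu_\eps\rightharpoonup\bfu$ weakly in $L^2(0,T;\mathbb{V})$, weakly-$*$ in $L^\infty(0,T;L^{\nu+1})$, $\bfB(\bfu_\eps)\rightharpoonup\chi$, and $\partial_t\bfB(\bfu_\eps)\rightharpoonup\partial_t\chi$ in $L^2(0,T;\mathbb{V}^*)$; a compactness argument (Aubin--Lions type, or the Alt--Luckhaus translation-in-time estimate, exactly as in \cite{AltLuckhaus1983,FiloKacur1995}) yields $\bfB(\bfu_\eps)\to\bfB(\bfu)$ and $\bfu_\eps\to\bfu$ a.e. on $Q_T$, hence $\chi=\bfB(\bfu)$ and $\bfu(0)=\bfu_0$, and then \eqref{strong_conv_th} upgrades the convergence of the lower-order terms $\bfF(\cdot,\bfu_\eps)$ and $\bfg(\cdot,\bfu_\eps)$ in the relevant Lebesgue spaces on $Q_T$ and $S_T$. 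The penalty bound $\int_{\Gamma_3}|(u_\eps^j)^+|^2\le C\eps\to0$ forces $(u^j)^+=0$ a.e.\ on $\Gamma_3$, i.e.\ $\bfu\in L^2(0,T;\mathcal{K})$. Finally, to obtain the \emph{inequality} \eqref{eq41} rather than an equation, I test the penalized equation with $\bfvarphi-\bfu_\eps$ for arbitrary $\bfvarphi\in L^2(0,T;\mathcal{K})\cap L^\infty(Q_T)^m$: monotonicity of $\bfbeta$ together with $\bfbeta(\bfvarphi)=\bfzero$ gives $\tfrac1\eps\int_{S_T}\bfbeta(\bfu_\eps)\cdot(\bfvarphi-\bfu_\eps)\le0$, so this penalty contribution can simply be dropped from the correct side of the inequality; the diffusion term is handled by a Minty-type / lower-semicontinuity argument using the a.e.\ convergence and \eqref{con_9a}, and $\limsup$/$\liminf$ bookkeeping on the $\partial_t\bfB$ term (again via the convexity identity for $\Psi$) closes the passage to the limit. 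The main obstacle is this last step: keeping the doubly nonlinear time-derivative term under control in the inequality — one cannot integrate by parts naively because $\partial_t\bfu$ is not available — so one relies on the chain-rule/energy identity for $\int_0^T\langle\partial_t\bfB(\bfu_\eps),\bfu_\eps\rangle$ in terms of $\int_\Omega\Psi(\bfu_\eps(T))-\int_\Omega\Psi(\bfu_0)$, takes $\liminf$ using weak lower semicontinuity of $\Psi$, and combines with the weak convergence of $\langle\partial_t\bfB(\bfu_\eps),\bfvarphi\rangle$; this is exactly the technically heaviest part and the place where property (ii) of the definition of variational solution is verified alongside.
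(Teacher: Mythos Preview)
Your proposal is correct and follows essentially the same route as the paper: introduce the penalty operator $\beta(\bfpsi)^j=(\psi^j)^+$ on $\Gamma_3$, solve the penalized equation by invoking Filo--Ka\v{c}ur, derive $\eps$-uniform a~priori bounds by testing with $\bfu_\eps$ (Legendre transform $\Psi$, coercivity in (A2), the interpolation inequality \eqref{interpol_ineq_gen} from (A4), Gronwall), record that the penalty term sits on the good side so $\int_{\Gamma_3\times(0,T)}|\bfu_\eps^+|^2\le C\eps$, and pass to the limit via Alt--Luckhaus compactness, a.e.\ convergence, and a Minty--Browder argument for the diffusion flux. The only organizational difference is that the paper packages the final passage as a separate convergence lemma for each constituent of $\mathscr{T}$ and then argues $\int_0^T\langle\mathscr{T}(\bfu_{\eps_n}),\bfu-\bfu_{\eps_n}\rangle\,{\rm d}t\to0$ directly, whereas you phrase it as a $\liminf$/energy-identity argument on the $\partial_t\bfB$ term; these are interchangeable, and your aside about the circular-looking bound on $\tfrac{1}{\eps}\beta(\bfu_\eps)$ in $\mathbb{V}^*$ is a point the paper simply asserts without further comment.
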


\begin{defn}\label{penalty_operator}
Let $\mathcal{S} \neq \emptyset$ be a closed and convex subset of a
reflexive Banach space $Y$. An operator $\mathscr{P}:Y\rightarrow
Y^*$ is called a penalty operator associated with
$\mathcal{S}\subset Y$ if
$$
\mathscr{P}(\zeta)=0_{Y^*} \Leftrightarrow \zeta \in \mathcal{S}.
$$
\end{defn}

Theorem \ref{main_result_existence} is a consequence of the
following
\begin{thm}\label{main_existence_theorem}
Let $\mathcal{K}$ be the closed and convex subset of the space
$\mathbb{V}$ defined by \eqref{konvex_K} and $\mathscr{T}$ be the
operator given by the equation \eqref{operator_T}. Let the
assumptions {\rm (A1)--(A5)} be satisfied. Then

 {\rm \textbf{(1)}} the operator
$\beta:\mathbb{V}\rightarrow \mathbb{V}^*$ given by the equation
\begin{equation}
\langle \beta(\bfpsi),\bfv \rangle=\int_{\Gamma_3} \bfpsi^+ \cdot
\bfv \;{\rm d}\Gamma \; \textmd{ for all } \bfv\in \mathbb{V}, \;
(\psi^+)^j=\max\left\{\psi^j(\bfx),0 \right\},
\end{equation}
represents a penalty operator associated with $\mathcal{K}$.

 {\rm \textbf{(2)}} For all $\varepsilon>0$ there exists
 $\bfu_{\varepsilon} \in  L^2(0,T;\mathbb{\mathbb{V}})$ with
${\partial_t\bfB(\bfu_{\varepsilon})} \in L^2(0,T;\mathbb{V}^*)$
(the variational solution of the penalized problem
$(P_{\varepsilon})$) such that
\begin{equation}\label{penalized_problem_weak}
\int_0^T \langle \mathscr{T}(\bfu_{\varepsilon}),\bfvarphi
\rangle{\rm d}t+\frac{1}{\varepsilon}\int_0^T \langle
\beta(\bfu_{\varepsilon}),\bfvarphi \rangle {\rm d}t =0
\end{equation}
for all $\bfvarphi\in L^2(0,T;\mathbb{V})\cap L^{\infty}(Q_T)^m$ and
$\bfu_{\varepsilon}(0)=\bfu^0$ in $\Omega$.

 {\rm \textbf{(3)}} Let $\varepsilon_n\rightarrow 0^+$
 as $n\rightarrow \infty$. The
sequence $\bfu_{\varepsilon_n}$ of solutions to Problems
$(P_{\varepsilon_n})$ converges weakly in $L^2(0,T;\mathbb{V})$
toward the variational solution $\bfu$ of \eqref{eq1}--\eqref{eq5}.
\end{thm}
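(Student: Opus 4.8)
The plan is to prove the three items in order, treating \textbf{(1)} as routine, \textbf{(2)} as a standard but technical Galerkin construction, and \textbf{(3)} as the heart of the matter, where the penalty term must be shown to vanish in the limit and the limit must be identified as a variational solution. For \textbf{(1)}, I would simply check the defining property of a penalty operator: $\beta$ is monotone because $t\mapsto t^+$ is nondecreasing, hemicontinuous (indeed bounded and demicontinuous by the trace theorem \eqref{trace_th}), and $\langle\beta(\bfpsi),\bfpsi\rangle=\int_{\Gamma_3}|\bfpsi^+|^2\,{\rm d}\Gamma\geq 0$ with equality iff $\bfpsi^+=0$ a.e.\ on $\Gamma_3$, i.e.\ iff $\bfpsi\in\mathcal{K}$. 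Here one also needs that the trace of an element of $\mathbb{V}$ can be tested against itself, which is immediate since $\mathbb{V}\hookrightarrow L^2(\Gamma_3)^m$.

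For \textbf{(2)}, I would fix $\varepsilon>0$ and run a Galerkin scheme in a basis of $\mathbb{V}$ (smooth functions from $E$), solving the finite-dimensional system of ODEs for $\partial_t\bfB(\bfu_\varepsilon^{(n)})$; local existence follows from Peano/Carath\'eodory once one observes that $\bfB=\nabla\Phi$ with $\Phi$ strictly convex makes the map $\bfu\mapsto\bfB(\bfu)$ a homeomorphism, so the ODE can be written in solvable form. The a priori estimates come from testing with $\bfu_\varepsilon^{(n)}$ itself: the term $\langle\partial_t\bfB(\bfu_\varepsilon),\bfu_\varepsilon\rangle$ is handled by the Alt--Luckhaus device, namely $\int_0^t\langle\partial_t\bfB(\bfu),\bfu\rangle\,{\rm d}s=\int_\Omega\Psi(\bfu(t))\,{\rm d}\bfx-\int_\Omega\Psi(\bfu_0)\,{\rm d}\bfx$ (plus lower-order terms), and then \eqref{conH2} gives control of $\|\bfu_\varepsilon(t)\|_{L^{\nu+1}}$; the ellipticity in (A2) gives the $L^2(0,T;\mathbb{V})$ bound; the penalty term contributes the nonnegative quantity $\tfrac{1}{\varepsilon}\int_0^t\int_{\Gamma_3}|\bfu_\varepsilon^+|^2$ on the good side; and the right-hand side terms $\bfF,\bfg$ are absorbed using \eqref{con_11}, the growth restriction $p\leq\nu$, the trace inequality, and crucially the interpolation inequality \eqref{interpol_ineq_gen} to swallow the boundary term into $\eta\|\bfu_\varepsilon\|_{W^{1,2}}^2+C(\eta)\|\bfu_\varepsilon\|_{L^{\nu+1}}^{\nu+1}$. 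An $L^\infty(Q_T)^m$ bound (uniform in $n$ but possibly depending on $\varepsilon$) is obtained by a Stampacchia/Moser truncation argument or is built into the scheme, so that the nonlinear coefficients stay in their continuity range; passing $n\to\infty$ with these bounds plus the strong convergence lemma \eqref{strong_conv_th} yields $\bfu_\varepsilon$ solving \eqref{penalized_problem_weak}, and the identity in (ii) of the definition of variational solution transfers to $\bfu_\varepsilon$ by integration by parts in time.

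For \textbf{(3)}, the key point is that the a priori estimates in \textbf{(2)} are \emph{uniform in $\varepsilon$}: the penalty term appears with a favorable sign, so it does not spoil the bounds, and in fact it gives $\tfrac{1}{\varepsilon_n}\int_0^T\int_{\Gamma_3}|\bfu_{\varepsilon_n}^+|^2\,{\rm d}\Gamma\,{\rm d}t\leq C$, whence $\|\bfu_{\varepsilon_n}^+\|_{L^2(S_T\cap(\Gamma_3\times(0,T)))}\to 0$ and the weak limit $\bfu$ satisfies $u^j\leq 0$ a.e.\ on $\Gamma_3\times(0,T)$, i.e.\ $\bfu\in L^2(0,T;\mathcal{K})$. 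From the uniform bounds I extract a subsequence with $\bfu_{\varepsilon_n}\rightharpoonup\bfu$ in $L^2(0,T;\mathbb{V})$, $\bfu_{\varepsilon_n}\to\bfu$ a.e.\ on $Q_T$ (using a compactness argument on $\bfB(\bfu_{\varepsilon_n})$ via the Alt--Luckhaus time-translation estimate, which bounds $\int_{Q_{T-h}}(\bfB(\bfu(t+h))-\bfB(\bfu(t)))\cdot(\bfu(t+h)-\bfu(t))$ by $Ch$, then Kruzhkov/Lions--Aubin type arguments give a.e.\ convergence), and $\partial_t\bfB(\bfu_{\varepsilon_n})\rightharpoonup\chi$ in $L^2(0,T;\mathbb{V}^*)$. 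The a.e.\ convergence together with \eqref{strong_conv_th} passes to the limit in the semilinear lower-order terms $K^{ji}(\bfu_{\varepsilon_n})\nabla u_{\varepsilon_n}^i$, $\bfe^j(\bfu_{\varepsilon_n})$, $\bfF(\cdot,\bfu_{\varepsilon_n})$, $\bfg(\cdot,\bfu_{\varepsilon_n})$. Finally, to get the inequality \eqref{eq41} I would test \eqref{penalized_problem_weak} with $\bfvarphi-\bfu_{\varepsilon_n}$ for an arbitrary $\bfvarphi\in L^2(0,T;\mathcal{K})\cap L^\infty(Q_T)^m$: the penalty term contributes $\tfrac{1}{\varepsilon_n}\int_0^T\langle\beta(\bfu_{\varepsilon_n}),\bfvarphi-\bfu_{\varepsilon_n}\rangle\,{\rm d}t\leq 0$ because $\beta$ is monotone and $\beta(\bfvarphi)=0$ (so $\langle\beta(\bfu_{\varepsilon_n})-\beta(\bfvarphi),\bfu_{\varepsilon_n}-\bfvarphi\rangle\geq 0$), so dropping it only strengthens an inequality; then a lower-semicontinuity/monotonicity (Minty-type) argument on the principal elliptic part and a $\liminf$ argument on the $\partial_t\bfB$ term — again via the $\Psi$-identity, which makes $\int_0^T\langle\partial_t\bfB(\bfu_{\varepsilon_n}),\bfu_{\varepsilon_n}\rangle\,{\rm d}t$ lower semicontinuous — yield \eqref{eq41} in the limit, while the identity (ii) and $\bfu(0)=\bfu_0$ pass to the limit directly. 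The main obstacle is exactly this last limit passage: reconciling the weak convergence of $\partial_t\bfB(\bfu_{\varepsilon_n})$ and of the gradients with the need to pass to the limit in the \emph{quadratic} term $\langle\partial_t\bfB(\bfu_{\varepsilon_n}),\bfu_{\varepsilon_n}\rangle$ and in $K^{ji}(\bfu_{\varepsilon_n})\nabla u^i_{\varepsilon_n}\cdot\nabla u^j_{\varepsilon_n}$, which forces the use of the Alt--Luckhaus convexity identity for the time term and a careful monotone-operator argument (or strong $L^2(0,T;\mathbb{V})$ convergence obtained by testing the difference of equations) for the elliptic term; keeping all constants independent of $\varepsilon_n$ throughout is what makes the scheme close.
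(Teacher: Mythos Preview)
Your proposal is correct and follows essentially the same route as the paper. A few minor differences worth noting: for \textbf{(2)} the paper simply invokes \cite[Theorem~1 and Remark~1]{FiloKacur1995} rather than re-running the Galerkin scheme (your sketch is in fact what that reference does, so there is no real divergence, though the $L^\infty(Q_T)^m$ bound you mention is neither claimed nor needed in this framework); for \textbf{(3)} the paper establishes $\bfu\in L^2(0,T;\mathcal{K})$ via a Minty argument applied to the monotone operator $\beta$ itself (from $\int_0^T\langle\beta(\bfv)-\beta(\bfu_{\varepsilon_n}),\bfv-\bfu_{\varepsilon_n}\rangle\geq0$ one gets $\int_0^T\langle\beta(\bfv),\bfv-\bfu\rangle\geq0$ and then lets $\bfv=\bfu+a\bfz$), whereas you use the more direct observation that $\tfrac{1}{\varepsilon_n}\|\bfu_{\varepsilon_n}^+\|_{L^2(\Gamma_3\times(0,T))}^2\leq C$ forces $\bfu_{\varepsilon_n}^+\to0$; both work. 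For the limit passage in the elliptic term the paper also uses a Minty--Browder identification of the weak limit of $K^{ji}(\bfu_{\varepsilon_n})\nabla u_{\varepsilon_n}^i$, exactly as you anticipate; the quadratic limit issue you flag as the ``main obstacle'' is indeed where the paper is most compressed, and your plan to handle it by lower semicontinuity of the convex terms is the standard remedy.
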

\begin{proof}
Part (1) \quad Due to \eqref{trace_th} the penalty operator $\beta$
is well defined and the equivalence $\beta(\bfu)={\bf0}$ iff $\bfu
\in \mathcal{K}$ is straightforward.

Part (2) \quad The assertion follows from \cite[Theorem 1 and Remark
1]{FiloKacur1995}.

Part (3) \quad Test \eqref{penalized_problem_weak} by
$\bfvarphi=\bfu_{\varepsilon}\chi_{(0,t)}$ (here $\chi_{(0,t)}$
denotes the characteristic function of $(0,t)$) to get
\begin{equation}\label{penalized_problem_weak_test}
\int_0^T \langle
\mathscr{T}(\bfu_{\varepsilon}),\bfu_{\varepsilon}\chi_{(0,t)}
\rangle{\rm d}s+\frac{1}{\varepsilon}\int_0^T \langle
\beta(\bfu_{\varepsilon}),\bfu_{\varepsilon}\chi_{(0,t)} \rangle
{\rm d}s =0
\end{equation}
and consequently
\begin{multline}\label{operator_T_hat}
\int_0^t \!\! \langle {\partial_s\bfB(\bfu_{\varepsilon})},
\bfu_{\varepsilon}\rangle{\rm d}s  + \!\! \int_{Q_t}\!\!\!\!\!
\left( K^{ji}(\bfu_{\varepsilon})\nabla u^i_{\varepsilon}
  +\bfe^j(\bfu_{\varepsilon})\right)\cdot\nabla u^j_{\varepsilon}{\rm
  d}Q_t
+\frac{1}{\varepsilon}\int_0^t \!\!\!\! \langle
\beta(\bfu_{\varepsilon}),\bfu_{\varepsilon}\rangle{\rm d}s
\\
=\int_0^t\!\!\!\int_{\Gamma_2}\bfg(\bfx,s,\bfu_{\varepsilon})\cdot
\bfu_{\varepsilon} \;{\rm d}S_t+\int_{Q_{t}}
\bfF(\bfx,s,\bfu_{\varepsilon})\cdot \bfu_{\varepsilon} \;{\rm
d}Q_t.
\end{multline}
Integrating by parts in the parabolic term, \eqref{operator_T_hat}
yields
\begin{multline}\label{eq201}
\int_{\Omega}\!\!\Psi(\bfu_{\varepsilon}(t)) \;{\rm d}\bfx + \!\!
\int_{Q_t}\!\! \!\!\! \left( K^{ji}(\bfu_{\varepsilon})\nabla
u^i_{\varepsilon} +\bfe^j(\bfu_{\varepsilon})\right)\cdot\nabla
u^j_{\varepsilon}{\rm d}Q_t +\frac{1}{\varepsilon}\int_0^t
\!\!\!\langle
\beta(\bfu_{\varepsilon}),\bfu_{\varepsilon}\rangle{\rm d}s
\\
=\int_{\Omega}\Psi(\bfu(0))\;{\rm d}\bfx
+\int_0^t\!\!\!\int_{\Gamma_2}\bfg(\bfx,s,\bfu_{\varepsilon})\cdot
\bfu_{\varepsilon} \;{\rm d}S_t+\int_{Q_{t}}
\bfF(\bfx,s,\bfu_{\varepsilon})\cdot \bfu_{\varepsilon} \;{\rm
d}Q_t.
\end{multline}
Now, taking into account (A1) together with (A3), one obtains
\begin{multline}\label{eq201b}
\int_{\Omega}\!\!\Psi(\bfu_{\varepsilon}(t)) \;{\rm d}\bfx + \!\!
\int_{Q_t}\!\! \!\!\! \left( K^{ji}(\bfu_{\varepsilon})\nabla
u^i_{\varepsilon} +\bfe^j(\bfu_{\varepsilon})\right)\cdot\nabla
u^j_{\varepsilon}{\rm d}Q_t +\frac{1}{\varepsilon}\int_0^t
\!\!\!\langle
\beta(\bfu_{\varepsilon}),\bfu_{\varepsilon}\rangle{\rm d}s
\\
\leq c_1\int_{\Omega}\Psi(\bfu(0)) \;{\rm d}\bfx
 + c_2 \int_0^t\!\!\!\int_{\Gamma_2}|\bfu_{\varepsilon}|^{\alpha+1}
 \;{\rm d}S_t + c_3 \int_{Q_{t}}
|\bfu_{\varepsilon}|^{p+1} \,{\rm d}Q_t .
\end{multline}
Further, \eqref{eq201b}, interpolation inequality
\eqref{interpol_ineq_gen} and (A1)--(A4) yield
\begin{multline}\label{eq201c}
\int_{\Omega}\Psi(\bfu_{\varepsilon}(t)) \;{\rm d}\bfx +\int_0^t
c_1\|\bfu_{\varepsilon}(s)\|^2_{\mathbb{V}} \;{\rm d}s
+\frac{1}{\varepsilon}\int_0^t \langle
\beta(\bfu_{\varepsilon}),\bfu_{\varepsilon}\rangle{\rm d}s
\\
\leq c_2\int_{\Omega}\Psi(\bfu(0)) \;{\rm d}\bfx + c_3 \int_{Q_{t}}
\Psi(\bfu_{\varepsilon}(s)) \;{\rm d}Q_t  + c_4.
\end{multline}
Applying the Gronwall's inequality to \eqref{eq201c} we arrive at
\begin{equation}\label{eq201d}
\int_{\Omega}\Psi(\bfu_{\varepsilon}(t)) \;{\rm d}\bfx \leq
\left(c_1\int_{\Omega}\Psi(\bfu(0)) \;{\rm d}\bfx + c_2\right)
\left( 1+ c_3 t \exp(c_3 t)\right)
\end{equation}
for a.e. $0\leq t\leq T$. Further, \eqref{eq201c}--\eqref{eq201d}
imply
\begin{equation}\label{eq203}
\int_0^t |\langle\beta(\bfu_{\varepsilon}),\bfu_{\varepsilon}
\rangle |\, {\rm d}s \leq \varepsilon c ,
\end{equation}
where $c$ is independent of $\varepsilon$. Hence, as $\varepsilon
\rightarrow 0$ we have
\begin{equation*}
\int_0^T \langle\beta(\bfu_{\varepsilon}),\bfu_{\varepsilon} \rangle
\, {\rm d}s \rightarrow 0 .
\end{equation*}
Analogously,
\begin{equation*}
\int_0^T \langle\beta(\bfu_{\varepsilon}),\bfv \rangle \, {\rm d}s
\rightarrow 0
\end{equation*}
for every $\bfv \in L^2(0,T;\mathbb{V})$ and
$\beta(\bfu_{\varepsilon})\rightarrow 0$. Further \eqref{eq201c} and
\eqref{eq201d} imply
\begin{equation}\label{est100}
\sup_{0\leq t \leq T}\int_{\Omega}\Psi(\bfu_{\varepsilon}(t)) \;{\rm
d}\bfx +\int_0^T  \|\bfu_{\varepsilon}(t)\|^2_{\mathbb{V}} \;{\rm
d}t \leq c,
\end{equation}
which yields (by (A1))
\begin{equation}\label{est100b}
\sup_{0\leq t \leq T}\int_{\Omega} |\bfu_{\varepsilon}(t)|^{\nu+1}
\;{\rm d}\bfx +\int_0^T  \|\bfu_{\varepsilon}(t)\|^2_{\mathbb{V}}
\;{\rm d}t \leq c.
\end{equation}
Since any bounded set in a reflexive Banach space is weakly
sequentially compact, we can find a subsequence $\left\{
\bfu_{\varepsilon_{n}} \right\}$ such that $\bfu_{\varepsilon_{n}}
\rightharpoonup \bfu\in L^2(0,T;\mathbb{V})$. Let $\bfv \in
L^2(0,T;\mathbb{V})$ be arbitrary fixed. Then
\begin{equation}
\int_0^T \langle\beta(\bfv)
-\beta(\bfu_{\varepsilon_{n}}),\bfv-\bfu_{\varepsilon_{n}} \rangle
\, {\rm d}t \geq 0
\end{equation}
yields
\begin{equation}
\int_0^T \langle\beta(\bfv),\bfv-\bfu \rangle \, {\rm d}t \geq 0.
\end{equation}
Choose $\bfv=\bfu+a\bfz$ ($a>0$, $\bfz \in L^2(0,T;\mathbb{V})$
arbitrary), hence
\begin{equation}
\int_0^T \langle\beta(\bfu+a \bfz),\bfz \rangle \, {\rm d}t \geq 0
\end{equation}
and letting $a \rightarrow 0^+$ we have
\begin{equation}
\int_0^T \langle\beta(\bfu),\bfz \rangle \, {\rm d}t \geq 0
\end{equation}
for every $\bfz \in L^2(0,T;\mathbb{V})$. Hence $\beta(\bfu)=0$,
that is, $\bfu \in
 L^2(0,T;\mathcal{K})$.
For arbitrary fixed $\bfv \in  L^2(0,T;\mathcal{K})$, we deduce
using the equation \eqref{penalized_problem_weak}
\begin{equation}\label{ineq_10}
\int_0^T \langle
{\mathscr{T}}(\bfu_{\varepsilon_n}),\bfv-\bfu_{\varepsilon_n}
\rangle{\rm d}t =\frac{1}{\varepsilon_n}\int_0^T \langle
\beta(\bfv)-\beta(\bfu_{\varepsilon_n}), \bfv-\bfu_{\varepsilon_n}
\rangle {\rm d}t \geq 0.
\end{equation}
In the rest of this section we prove that as $\bfu_{\varepsilon_{n}}
\rightharpoonup \bfu\in L^2(0,T;\mathcal{K})$ then \eqref{ineq_10}
reads
\begin{displaymath}
\int_0^T\langle {\mathscr{T}}(\bfu),\bfvarphi-\bfu \rangle \;{\rm
d}t \geq 0 \; \textmd{ for every } \bfvarphi\in
L^{2}(0,T;\mathcal{K}).
\end{displaymath}
In order to do that we prove the following
\begin{lem}\label{weak_convergence}
The sequence $\bfu_{\varepsilon_n}$ satisfies
\begin{equation}\label{weakconv_a}
\left\{
\begin{array}{rclll}
{\partial_t\bfB(\bfu_{\varepsilon_n})} \cdot \bfv  &\rightarrow&
{\partial_t\bfB(\bfu)}\cdot \bfv &{\rm in }&  L^1(Q_T),
\\
\bfF(\bfx,t,\bfu_{\varepsilon_n}) \cdot \bfv  & \rightarrow &
\bfF(\bfx,t,\bfu) \cdot \bfv  &{\rm in }&  L^1(Q_T),
\\
\bfg(\bfx,t,\bfu_{\varepsilon_n}) \cdot \bfv  & \rightarrow &
\bfg(\bfx,t,\bfu) \cdot \bfv  &{\rm in }& L^1((0,T)\times \Gamma_2)
\end{array}
\right.
\end{equation}
for every $\bfv \in L^2(0,T;\mathbb{V})\cap L^{\infty}(Q_T)^m$ and
\begin{equation}\label{weakconv_b}
\left\{
\begin{array}{rclll}
K^{ji}(\bfu_{\varepsilon_n})\nabla
u^i_{\varepsilon_n}&\rightharpoonup & K^{ji}(\bfu)\nabla u^i  & {\rm
in } & L^2(Q_T)^N,
\\
\bfe^j(\bfu_{\varepsilon_n}) & \rightharpoonup & \bfe^j(\bfu) &{\rm
in }& L^2(Q_T)^N.
\end{array}
\right.
\end{equation}
\end{lem}
\begin{proof}
Due to (A1)--(A4), \eqref{operator_T} and
\eqref{penalized_problem_weak} we have
\begin{equation}
\sup_{\|\bfv\|_{L^2(0,T;\mathbb{V})}\leq 1} \Bigg| \int_{Q_T}
{\partial_t \bfB(\bfu_{\varepsilon_n})} \cdot \bfv \;{\rm d}Q_T
\Bigg| \leq c.
\end{equation}
Hence, the sequence $\left\{ {\partial_t\bfB(\bfu_{\varepsilon_n})}
\right\}$ is uniformly bounded in $L^2(0,T;\mathbb{V}^*)$ and,
consequently, there exists a subsequence and $\bfchi$ such that
\begin{equation}\label{eq550}
{\partial_t\bfB(\bfu_{\varepsilon_n})} \rightharpoonup \bfchi
\textmd{ in } L^2(0,T;\mathbb{V}^*).
\end{equation}
The identity
\begin{equation}\label{eq551}
\int_{Q_T} {\partial_t\bfB(\bfu_{\varepsilon_n})}\cdot \bfv \;{\rm
d}Q_T = -\int_{Q_T}
\left(\bfB(\bfu_{\varepsilon_n})-\bfB(\bfu^0)\right) \cdot \frac{d
\bfv}{dt} \; {\rm d}Q_T
\end{equation}
holds for every $\bfv \in L^2(0,T;\mathbb{V})\cap
L^{\infty}(Q_T)^m$, ${d \bfv}/{dt}\in L^{\infty}(Q_T)^m$. Using the
compactness argument one can show in the same way as in \cite[Lemma
1.9]{AltLuckhaus1983} the convergence
\begin{equation}\label{eq555}
\bfB(\bfu_{\varepsilon_n})\rightarrow \bfB(\bfu)\textmd{ in
}L^1(Q_T).
\end{equation}
Taking the limit in \eqref{eq551} and using \eqref{eq550} and
\eqref{eq555} we get
\begin{equation}\label{eq552}
\int_{Q_T} \bfchi \cdot \bfv \;{\rm d}Q_T =  -\int_{Q_T}
\left(\bfB(\bfu)-\bfB(\bfu^0)\right) \cdot \frac{d \bfv}{dt} \; {\rm
d}Q_T
\end{equation}
for every $\bfv \in L^2(0,T;\mathbb{V})\cap L^{\infty}(Q_T)^m$, ${d
\bfv}/{dt}\in L^{\infty}(Q_T)^2$. Now \eqref{eq552} yields
\begin{equation}\label{eq553}
\int_{Q_T} \bfchi \cdot \bfv \;{\rm d}Q_T = \int_{Q_T}
\partial_t\bfB(\bfu) \cdot  \bfv  \; {\rm d}Q_T
\end{equation}
for every $\bfv \in L^2(0,T;\mathbb{V})\cap L^{\infty}(Q_T)^m$ and
therefore $\bfchi =
\partial_t\bfB(\bfu)$.

Since $B^j$ is strictly monotone and from \eqref{eq555} it follows
that \cite[Proposition 3.35]{Kacur1990a}
\begin{equation}\label{conv101}
\bfu_{\varepsilon_n} \rightarrow \bfu \textmd{ a.e. in } Q_T.
\end{equation}
Hence we have
\begin{equation}\label{conv99}
\left\{
\begin{array}{rcll}
\bfF(\bfx,t,\bfu_{\varepsilon_n}) & \rightarrow & \bfF(\bfx,t,\bfu)
&{\textmd{ a.e. in }} Q_T,
\\
\bfg(\bfx,t,\bfu_{\varepsilon_n}) & \rightarrow & \bfg(\bfx,t,\bfu)
&  {\textmd{ a.e. in }} (0,T)\times\Gamma_2.
\end{array} \right.
\end{equation}
Now \eqref{con_11} and \eqref{conv99} imply that for every  $\bfv
\in L^2(0,T;\mathbb{V})\cap L^{\infty}(Q_T)^m$ one obtains
\begin{equation*}
\left\{
\begin{array}{rclll}
\bfF(\bfx,t,\bfu_{\varepsilon_n}) \cdot \bfv  & \rightarrow &
\bfF(\bfx,t,\bfu) \cdot \bfv  &{\rm in }&  L^1(Q_T),
\\
\bfg(\bfx,t,\bfu_{\varepsilon_n}) \cdot \bfv  & \rightarrow &
\bfg(\bfx,t,\bfu) \cdot \bfv  &{\rm in }& L^1((0,T)\times \Gamma_2).
\end{array}
\right.
\end{equation*}
Further, \eqref{strong_conv_th} yields the convergence
\begin{equation*}
\left\{
\begin{array}{rclll}
\bfF(\bfx,t,\bfu_{\varepsilon_n}) \cdot \bfu_{\varepsilon_n}  &
\rightarrow & \bfF(\bfx,t,\bfu) \cdot \bfu  &{\rm in }&  L^1(Q_T),
\\
\bfg(\bfx,t,\bfu_{\varepsilon_n}) \cdot \bfu_{\varepsilon_n}  &
\rightarrow & \bfg(\bfx,t,\bfu) \cdot \bfu  &{\rm in }&
L^1((0,T)\times \Gamma_2).
\end{array}
\right.
\end{equation*}
Now \eqref{con_9a} and \eqref{conv101} give the convergence
$\bfe^j(\bfu_{\varepsilon_n})\rightharpoonup \bfe^j(\bfu)$ in
$L^2(Q_T)^m$.

Using \eqref{con_9a} and \eqref{est100} we arrive at
\begin{equation}\label{est120}
\|K^{ji}(u^i_{\varepsilon})\nabla u^i_{\varepsilon}\|_{L^2(Q_T)^N}
\leq  C.
\end{equation}
Hence there exists $\bfvarphi^j\in L^2(Q_T)^N$ such that
\begin{equation}\label{conv105}
K^{ji}(\bfu_{\varepsilon_n})\nabla
u^i_{\varepsilon_n}\rightharpoonup \bfvarphi^j \textmd{ in }
L^2(Q_T)^N.
\end{equation}
To prove $\bfvarphi^j = K^{ji}(\bfu)\nabla u^i$ we follow the trick
of Minty-Browder in reflexive spaces. Obviously, for every $\bfw\in
L^2(0,T;\mathbb{V})$ we have
\begin{equation}\label{mon_100}
\int_{Q_T} \left( K^{ji}(\bfu_{\varepsilon_n})\nabla
u^i_{\varepsilon_n}-K^{ji}(\bfu_{\varepsilon_n})\nabla w^i
\right)\cdot \left( \nabla u^j_{\varepsilon_n}-\nabla w^j \right)
\;{\rm d}Q_T \geq 0.
\end{equation}
Letting $\varepsilon_n \rightarrow 0$ one obtains
\begin{equation}
\int_{Q_T} \left( \bfvarphi^j - K^{ji}(\bfu)\nabla w^i\right) \cdot
\left(\nabla u^j - \nabla w^j \right)\;{\rm d}Q_T \geq 0.
\end{equation}
Fix any $\bfv\in L^2(0,T;\mathbb{V})$ and set $\bfw=\bfu-\tau \bfv$
$(\tau>0)$ to obtain (letting $\tau\rightarrow 0$)
\begin{equation}\label{ineq_105}
\int_{Q_T} \left( \bfvarphi^j - K^{ji}(\bfu)\nabla u^i \right) \cdot
\nabla v^j \;{\rm d}Q_T \geq 0.
\end{equation}
Replacing $\bfv$ by $-\bfv$ we deduce that equality holds above.
Hence we get
\begin{equation}\label{conv110}
\bfvarphi^j = K^{ji}(\bfu)\nabla u^i  \quad {\textmd{ a.e. in }} Q_T
.
\end{equation}
Now \eqref{conv105} and \eqref{conv110} yield
$K^{ji}(\bfu_{\varepsilon_n})\nabla u^i_{\varepsilon_n}
\rightharpoonup K^{ji}(\bfu)\nabla u^i$ in $L^2(Q_T)^N$. The proof
of Lemma \ref{weak_convergence} is complete.
\end{proof}

\smallskip

By Lemma \ref{weak_convergence} we have
$$
\int_0^T \langle {\mathscr{T}}(\bfu_{\varepsilon_n}),\bfv
\rangle{\rm d}t \rightarrow \int_0^T \langle
{\mathscr{T}}(\bfu),\bfv \rangle{\rm d}t
$$
for every $\bfv \in L^2(0,T;\mathbb{V})$. Thus using the inequality
\eqref{ineq_10} it follows
\begin{eqnarray*}
0 &\leq& \lim_{n\rightarrow \infty}  \int_0^T \langle
{\mathscr{T}}(\bfu_{\varepsilon_n}),\bfv-\bfu_{\varepsilon_n}
\rangle{\rm d}t\\
&=& \lim_{n\rightarrow \infty} \left\{\int_0^T \langle
{\mathscr{T}}(\bfu_{\varepsilon_n}),\bfu-\bfu_{\varepsilon_n}
\rangle{\rm d}t + \int_0^T \langle
{\mathscr{T}}(\bfu_{\varepsilon_n}),\bfv-\bfu \rangle{\rm d}t
\right\}.
\end{eqnarray*}
Since $\bfu_{\varepsilon_n} \rightarrow \bfu$ a.e. in $Q_T$ and
${\mathscr{T}}(\bfu_{\varepsilon_n})\rightharpoonup
{\mathscr{T}}(\bfu)$ we arrive at
\begin{displaymath}
\int_0^T\langle {\mathscr{T}}(\bfu),\bfvarphi-\bfu \rangle \;{\rm
d}t \geq 0 \; \textmd{ for every } \bfvarphi\in
L^{2}(0,T;\mathcal{K}).
\end{displaymath}
This completes the proof of Theorem \ref{main_existence_theorem}.
\end{proof}

\subsection{The uniqueness of the solution}
\label{sec:main_result_uniqueness}

In this section we prove the uniqueness of the solution. In order to
do that, we assume the structure condition
\begin{equation}\label{con_uniq}
K^{ji}(\bfz)=0 \textmd{ for } j\neq i \textmd{ (i.e. $K^{ji}$ is a
diagonal matrix)}.
\end{equation}
It is convenient to denote $K^{j}=K^{jj}$. In addition to (A2) and
\eqref{con_uniq} we suppose
\begin{equation}
K^{j}(\bfz)=K^{j}(z^j) {\textmd{ and }} c_1\leq K^{j}(\xi)\leq c_2
\quad \forall \xi \in \mathbb{R}, \; j=1,\dots,m. \label{con_uniq2}
\end{equation}

\begin{thm}
Let {\rm (A1)--(A5)} be satisfied and
\eqref{con_uniq}--\eqref{con_uniq2} hold. Moreover, assume that
there exists the constant $C_L>0$ such that for all $\xi_1,\xi_2\in
\mathbb{R}$ and $\bfz_1,\bfz_2\in \mathbb{R}^m$ we have
($j=1,\dots,m$)
\begin{equation}\label{lipsch_cond}
\left\{
\begin{array}{rcll}
|K^j(\xi_1)-K^j(\xi_2)|&\leq& C_L |\xi_1-\xi_2|,
\\
|\bfe^j(\bfz_1)-\bfe^j(\bfz_2)|&\leq& C_L |\bfz_1-\bfz_2|,&
\\
|F^j(\bfx,t,\bfz_1)-F^j(\bfx,t,\bfz_2)|&\leq& C_L |\bfz_1-\bfz_2|&
\forall\, [\bfx,t] \in Q_T,
\\
|g^j(\bfx,t,\bfz_1)-g^j(\bfx,t,\bfz_2)|&\leq& C_L |\bfz_1-\bfz_2|&
\forall\, [\bfx,t] \in \Gamma_2 \times (0,T).
\end{array} \right.
\end{equation}
Then the variational solution to \eqref{eq1}--\eqref{eq5} is unique.
\end{thm}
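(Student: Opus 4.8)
The plan is to compare two variational solutions $\bfu_1,\bfu_2$ of \eqref{eq1}--\eqref{eq5} with the same data, to set $\bfw:=\bfu_1-\bfu_2$, and to run a Gronwall argument on an energy estimate for $\bfw$ localized on $(0,t)$. By the Remark following \eqref{operator_T}, each $\bfu_k$ satisfies $\int_0^T\langle\mathscr{T}(\bfu_k),\bfvarphi-\bfu_k\rangle\,{\rm d}t\ge 0$ for all $\bfvarphi\in L^2(0,T;\mathcal{K})\cap L^\infty(Q_T)^m$. I would test the inequality for $\bfu_1$ with $\bfvarphi=\bfu_1+(\bfu_2-\bfu_1)\chi_{(0,t)}$ and the one for $\bfu_2$ with $\bfvarphi=\bfu_2+(\bfu_1-\bfu_2)\chi_{(0,t)}$: for $s<t$ these equal the other solution and for $s>t$ they equal $\bfu_k$ itself, so they lie in $L^2(0,T;\mathcal{K})$, and since the solutions need not be in $L^\infty(Q_T)^m$ one first replaces each component by its truncation at the levels $\pm M$ (which does not destroy the sign condition on $\Gamma_3$) and lets $M\to\infty$ afterwards. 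Adding the two resulting inequalities and using \eqref{operator_T} together with the diagonal structure \eqref{con_uniq}, $K^j=K^{jj}$, yields
\begin{multline*}
\int_0^t\!\langle\partial_s(\bfB(\bfu_1)-\bfB(\bfu_2)),\bfw\rangle\,{\rm d}s
+\int_{Q_t}\!\bigl(K^{j}(u_1^j)\nabla u_1^j-K^{j}(u_2^j)\nabla u_2^j\bigr)\cdot\nabla w^j\,{\rm d}Q_t
\\
\le\int_{Q_t}\!\bigl(\bfF(\bfx,s,\bfu_1)-\bfF(\bfx,s,\bfu_2)\bigr)\cdot\bfw\,{\rm d}Q_t
+\int_0^t\!\!\!\int_{\Gamma_2}\!\bigl(\bfg(\bfx,s,\bfu_1)-\bfg(\bfx,s,\bfu_2)\bigr)\cdot\bfw\,{\rm d}S_t
\\
-\int_{Q_t}\!\bigl(\bfe^j(\bfu_1)-\bfe^j(\bfu_2)\bigr)\cdot\nabla w^j\,{\rm d}Q_t .
\end{multline*}

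Next I would estimate every term on the right and the elliptic term on the left. Writing the elliptic term as $\int_{Q_t}K^j(u_1^j)|\nabla w^j|^2+\int_{Q_t}\bigl(K^j(u_1^j)-K^j(u_2^j)\bigr)\nabla u_2^j\cdot\nabla w^j$, the first integral is $\ge c_1\|\nabla\bfw\|_{L^2(Q_t)}^2$ by \eqref{con_uniq2}, and the second is, by the Lipschitz bound on $K^j$ in \eqref{lipsch_cond} and Young's inequality, bounded in modulus by $\tfrac{c_1}{4}\|\nabla\bfw\|_{L^2(Q_t)}^2+C\int_{Q_t}|\bfw|^2|\nabla\bfu_2|^2$. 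The $\bfF$- and $\bfe^j$-terms on the right are handled by \eqref{lipsch_cond} and Young's inequality, and the $\bfg$-term by \eqref{lipsch_cond}, the trace inequality \eqref{trace_th}, the compactness of $W^{1,2}(\om)\hookrightarrow L^2(\partial\om)$ and the Poincar\'e inequality on $\mathbb{V}$; altogether these are absorbed by $\tfrac{c_1}{4}\|\nabla\bfw\|_{L^2(Q_t)}^2+C\int_0^t\bigl(\int_\om|\bfw(s)|^2\,{\rm d}\bfx\bigr){\rm d}s$. The only genuinely delicate contribution is $\int_{Q_t}|\bfw|^2|\nabla\bfu_2|^2$; it is controlled using the regularity $\bfu_2\in L^2(0,T;\mathbb{V})\cap L^\infty(0,T;L^{\nu+1}(\om)^m)$, the embedding $\mathbb{V}\hookrightarrow L^6(\om)^m$ and interpolation, producing a bound $\varepsilon\|\nabla\bfw\|_{L^2(Q_t)}^2+\int_0^t a(s)\bigl(\int_\om|\bfw(s)|^2\,{\rm d}\bfx\bigr){\rm d}s$ with $a\in L^1(0,T)$. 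I note that \eqref{con_uniq}--\eqref{con_uniq2} are exactly what is needed to make this term disappear: after the Kirchhoff substitution $w_k^j:=\int_0^{u_k^j}K^j(\sigma)\,{\rm d}\sigma$ --- a bi-Lipschitz change of unknowns because $c_1\le K^j\le c_2$, mapping $\mathcal{K}$ onto a set of the same type --- the principal part becomes $-\Delta$ and the elliptic term above is simply $\|\nabla(\bfw_1-\bfw_2)\|_{L^2(Q_t)}^2$, with no cross term.

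The decisive step is a lower bound for the time term $\int_0^t\langle\partial_s(\bfB(\bfu_1)-\bfB(\bfu_2)),\bfw\rangle\,{\rm d}s$. Since $\bfw$ is not the time derivative of a fixed function in $L^2(0,T;\mathbb{V})$, no chain rule applies directly; instead I would regularize $\bfu_1,\bfu_2$ in time (Steklov averages), use part (ii) of the definition of the variational solution, pass to the limit, and exploit the strict convexity of $\Phi$ from (A1) together with \eqref{conH2} --- in the same spirit as the identity $\int_0^t\langle\partial_s\bfB(\bfu_\varepsilon),\bfu_\varepsilon\rangle\,{\rm d}s=\int_\om(\Psi(\bfu_\varepsilon(t))-\Psi(\bfu_\varepsilon(0)))\,{\rm d}\bfx$ used in the proof of Theorem \ref{main_existence_theorem}(3), and following the integration-by-parts device of Alt \& Luckhaus \cite{AltLuckhaus1983} --- to obtain, modulo cross terms reabsorbed into the estimates above, a bound $\int_0^t\langle\partial_s(\bfB(\bfu_1)-\bfB(\bfu_2)),\bfw\rangle\,{\rm d}s\ge c\int_\om|\bfw(t)|^2\,{\rm d}\bfx$, the right-hand side vanishing at $t=0$ since $\bfu_1(0)=\bfu_2(0)=\bfu_0$. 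Collecting everything, absorbing the gradient terms, and putting $y(t):=\int_\om|\bfw(t)|^2\,{\rm d}\bfx$, one arrives at $y(t)\le C\int_0^t(1+a(s))\,y(s)\,{\rm d}s$ with $a\in L^1(0,T)$ and $y(0)=0$; Gronwall's inequality then forces $y\equiv 0$, hence $\bfu_1=\bfu_2$.

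I expect the main obstacle to be precisely this time term: both the rigorous time-regularization for objects whose time derivatives live only in $L^2(0,T;\mathbb{V}^*)$, and the extraction of a lower bound that genuinely dominates $\|\bfw(t)\|_{L^2(\om)}^2$ (for $\Phi$ strictly but not necessarily uniformly convex, this is where \eqref{conH2} and the Lipschitz conditions \eqref{lipsch_cond} have to be used with care). The term $\int_{Q_t}|\bfw|^2|\nabla\bfu_2|^2$ is a secondary difficulty, neutralized either by the integrability properties of the solutions or, more transparently, by the Kirchhoff substitution that \eqref{con_uniq}--\eqref{con_uniq2} make available.
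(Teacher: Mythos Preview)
Your plan diverges from the paper's proof in the one place that matters, and the divergence is a genuine gap rather than an alternative route.

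The paper does \emph{not} test with the difference $\bfu_1-\bfu_2$. After the Kirchhoff substitution $\bfh=\mathscr{K}(\bfu)$ (which it performs at the outset, not as an optional simplification), it sets $\bfR=(\bfB\circ\mathscr{K}^{-1})(\bfh)-(\bfB\circ\mathscr{K}^{-1})(\widetilde{\bfh})$ and introduces the \emph{dual} function $\bfw\in L^2(0,T;\mathbb{V})$ defined, via Lax--Milgram, by $\int_{Q_T}\nabla w^j\cdot\nabla\phi^j=\int_0^T\langle\bfR,\bfphi\rangle$ for all $\bfphi$. Testing the (transformed) variational inequality with $\bfh\pm\bfw$ and using this identity converts the parabolic term into an exact equality
\[
\int_0^t\langle\partial_s\bfR,\bfw\rangle\,{\rm d}s=\tfrac12\int_\Omega|\nabla\bfw(t)|^2\,{\rm d}\bfx,
\]
while the elliptic contribution becomes $\int_{Q_t}\bfR\cdot(\bfh-\widetilde{\bfh})\ge 0$ by monotonicity. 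Gronwall is then run on $\|\bfw(t)\|_{\mathbb{V}}^2$, a dual norm of the difference, and $\bfw\equiv 0$ forces $\bfR\equiv 0$, hence $\bfh=\widetilde{\bfh}$.

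Your direct approach asks for $\int_0^t\langle\partial_s(\bfB(\bfu_1)-\bfB(\bfu_2)),\bfu_1-\bfu_2\rangle\,{\rm d}s\ge c\int_\Omega|\bfu_1(t)-\bfu_2(t)|^2\,{\rm d}\bfx$. This is exactly strong monotonicity of $\bfB$, i.e.\ uniform convexity of $\Phi$, and it is \emph{not} available: (A1) gives only strict convexity together with the one-sided growth \eqref{conH2}, which controls $\Psi(\bfz)$ from below but says nothing about $(\bfB(\bfz_1)-\bfB(\bfz_2))\cdot(\bfz_1-\bfz_2)$. The Alt--Luckhaus integration-by-parts you invoke yields, at best, a nonnegative Bregman-type quantity, not a multiple of $\|\bfw(t)\|_{L^2}^2$; so there is nothing on the left to absorb the $\int_0^t\|\bfw\|_{L^2}^2$ terms produced on the right, and your Gronwall loop never closes. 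You correctly identified this as the main obstacle; the point is that it is not a technicality to be handled ``with care'' but the reason the paper uses the dual test function $\bfw=(-\Delta)^{-1}\bfR$ instead of $\bfu_1-\bfu_2$. The Kirchhoff transform alone does not fix it: it linearises the elliptic part (so your secondary difficulty with $\int|\bfw|^2|\nabla\bfu_2|^2$ indeed disappears), but the parabolic term is still governed by the merely strictly monotone $\bfB\circ\mathscr{K}^{-1}$.
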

\begin{proof}
Using Kirchhoff transformation ${\mathscr{K}}$ one transfers the
nonlinearities in the elliptic part to the parabolic term. Introduce
the new unknown variable $\bfh={\mathscr{K}}(\bfu)$,
${\mathscr{K}}:\mathbb{R}^m\rightarrow\mathbb{R}^m$,
\begin{equation}\label{kirchhoff_transform}
h^j(t,\bfx):=\int_{0}^{u^j(t,\bfx)}K^j(\xi){\rm{d}}\xi, \quad
j=1,\dots,m.
\end{equation}
Due to \eqref{con_uniq2} ${\mathscr{K}}$ is continuous and
increasing, and one-to-one with $\mathscr{K}^{-1}$
Lipschitz-continuous. Let $\bfh={\mathscr{K}}(\bfu)$ and
$\widetilde{\bfh}={\mathscr{K}}(\widetilde{\bfu})$, $\mathscr{K}$ is
defined by \eqref{kirchhoff_transform}, where $\bfu$ and
$\widetilde{\bfu}$ are two variational solutions  to
\eqref{eq1}--\eqref{eq5} on $Q_{T}$. Set
\begin{equation}\label{}
R^j:=(B^j\circ{\mathscr{K}}^{-1})(\bfh)
-(B^j\circ{\mathscr{K}}^{-1})(\widetilde{\bfh}), \quad j=1,\dots,m,
\end{equation}
and denote $\bfR=(R^1,\dots,R^m)$. Note that $\bfR\in
L^2(0,T;\mathbb{V}^*)$. By the Lax-Milgram theorem there is a
function $\bfw \in L^2(0,T;\mathbb{V})$ such that
\begin{equation}\label{eq300}
\int_0^T \langle \bfR,\bfphi \rangle {\rm d}t = \int_{Q_T} \nabla
w^j \cdot
 \nabla \phi^j \;{\rm d}Q_T
\end{equation}
for every $\bfphi \in L^2(0,T;\mathbb{V})$. Now we follow the idea
presented by Alt \& Luckhaus in \cite{AltLuckhaus1983}. We have
\begin{multline}\label{eq300b}
\frac{1}{\tau}\int_0^{\tau} \langle \bfR,\bfw \rangle {\rm d}s +
\frac{2}{\tau}\int_{\tau}^{t+\tau} \langle
\bfR(s)-\bfR(s-\tau),\bfw(s) \rangle {\rm d}s
\\
=\frac{1}{\tau}\int_{0}^{t} \langle
\bfR(s+\tau)-\bfR(s),\bfw(s+\tau) \rangle {\rm d}s -
\frac{1}{\tau}\int_{0}^{t} \langle \bfR(s),\bfw(s+\tau)-\bfw(s)
\rangle {\rm d}s
\\
+\frac{1}{\tau}\int_{t}^{t+\tau} \langle \bfR,\bfw\rangle {\rm d}s.
\end{multline}
In view of \eqref{eq300} we obtain
\begin{multline}\label{eq300c}
\frac{1}{\tau}\int_{0}^{t} \langle \bfR(s+\tau)-\bfR(s),\bfw(s+\tau)
\rangle {\rm d}s - \frac{1}{\tau}\int_{0}^{t} \langle
\bfR(s),\bfw(s+\tau)-\bfw(s) \rangle {\rm d}s
\\
=\frac{1}{\tau}\int_{Q_t} \left( \nabla w^j(s+\tau)-\nabla
w^j(s)\right) \cdot \nabla w^j(s+\tau) {\rm d}Q_t
\\
- \frac{1}{\tau}\int_{Q_t} \nabla w^j(s)\cdot \left( \nabla
w^j(s+\tau)- \nabla w^j(s)\right) {\rm d}Q_t
\end{multline}
and
\begin{equation}\label{eq300d}
\frac{1}{\tau}\int_{t}^{t+\tau} \langle \bfR,\bfw\rangle {\rm d}s =
\frac{1}{\tau}\int_{t}^{t+\tau}\int_{\Omega} \nabla w^j \cdot \nabla
w^j {\rm d}\bfx {\rm d}s .
\end{equation}
Now the equations \eqref{eq300b}--\eqref{eq300d}, taken together,
yield
\begin{multline}
\frac{1}{\tau}\int_0^{\tau} \langle \bfR,\bfw \rangle {\rm d}s +
\frac{2}{\tau}\int_{\tau}^{t+\tau} \langle
\bfR(s)-\bfR(s-\tau),\bfw(s) \rangle {\rm d}s
\\
=\frac{1}{\tau}\int_{Q_t} \left( \nabla w^j(s+\tau) - \nabla
w^j(s)\right) \cdot \left(\nabla w^j(s+\tau) - \nabla w^j(s)
\right){\rm d}Q_t
\\
+\frac{1}{\tau}\int_{t}^{t+\tau}\int_{\Omega} \nabla w^j \cdot
\nabla w^j {\rm d}\bfx {\rm d}s.
\end{multline}
Hence, as $\tau \rightarrow 0$, we get
\begin{equation}\label{eq301}
\int_0^t \langle \partial_s \bfR,\bfw \rangle {\rm d}s =
\frac{1}{2}\int_{\Omega} \nabla w^j(t) \cdot \nabla w^j(t) \;{\rm
d}\bfx.
\end{equation}
Moreover, we have
\begin{equation}\label{eq302}
\int_{Q_t}  \bfR\cdot (\bfh-\widetilde{\bfh}) \;{\rm d}Q_t =
\int_{Q_t} \nabla w^j \cdot \nabla (h^j-\widetilde{h}^j) \;{\rm
d}Q_t.
\end{equation}
Applying the Kirchhoff transformation to \eqref{eq41} and taking
$\bfvarphi=\bfh \pm \bfw$ one obtains
\begin{multline}\label{eq410}
\int_0^t \langle \partial_s (B^j\circ{\mathscr{K}}^{-1})(\bfh)
,w^j\rangle {\rm d}s +\int_{Q_t} \nabla h^j \cdot \nabla w^j \;{\rm
d}Q_t  +\int_{Q_t} \widehat{\bfe}^j(\bfh)\cdot \nabla w^j \;{\rm
d}Q_t
\\
=\int_{Q_{t}} \widehat{\bfF}(\bfx,s,\bfh) \cdot \bfw \;{\rm d}Q_t
+\int_0^t\!\!\!\int_{\Gamma_2}\widehat{\bfg}(\bfx,s,\bfh) \cdot \bfw
\;{\rm d}S_t.
\end{multline}
Here we denote
\begin{equation}\label{assum_20}
\begin{array}{rclll}
\widehat{\bfe}^j(\bfh)&=&{\bfe}^j({\mathscr{K}}^{-1}(\bfh)),
\\
\widehat{F}^j(\bfx,s,\bfh)&=&{F}^j(\bfx,s,{\mathscr{K}}^{-1}(\bfh)),
\\
\widehat{g}^j(\bfx,s,\bfh) &=&
{g}^j(\bfx,s,{\mathscr{K}}^{-1}(\bfh)).
\end{array}
\end{equation}
Writing \eqref{eq410} for $\bfh$ and $\widetilde{\bfh}$ and taking
the difference of both equations we get for $t\in(0,T)$
\begin{eqnarray}\label{eq303}
\int_0^t \langle \partial_s \bfR,\bfw\rangle {\rm d}s &+& \int_{Q_t}
\nabla(h^j-\widetilde{h}^j) \cdot \nabla w^j \;{\rm d}Q_t
\nonumber \\
&=&-\int_{Q_t} \left(
\widehat{\bfe}^j(\bfh)-\widehat{\bfe}^j(\widetilde{\bfh}) \right)
\cdot \nabla w^j \;{\rm d}Q_t
\nonumber\\
&&+\int_{Q_{t}} \left(
\widehat{\bfF}(\bfx,s,\bfh)-\widehat{\bfF}(\bfx,s,\widetilde{\bfh})
\right) \cdot \bfw \;{\rm d}Q_t
\nonumber \\
&&+\int_0^t\!\!\!\int_{\Gamma_2} \left(
\widehat{\bfg}(\bfx,s,\bfh)-\widehat{\bfg}(\bfx,s,\widetilde{\bfh})
\right)\cdot \bfw \;{\rm d}S_t .
\nonumber \\
\end{eqnarray}
Estimating each integral on the right-hand side and using
\eqref{lipsch_cond} together with the Young inequality one obtains,
consequently,
\begin{multline}\label{eq304}
\int_{Q_t} \left(
\widehat{\bfe}^j(\bfh)-\widehat{\bfe}^j(\widetilde{\bfh}) \right)
\cdot \nabla w^j \;{\rm d}Q_t
\\
\leq c_1 \delta \int_0^t
\|\bfh-\widetilde{\bfh}\|^2_{L^2(\Omega)^m}{\rm d}s + c_2 C(\delta)
\int_0^t \|\bfw\|^{2}_{W^{1,2}(\Omega)^m} {\rm d}s
\end{multline}
and in the similar way
\begin{multline}\label{eq305}
\int_{Q_{t}} \left(
\widehat{\bfF}(\bfx,s,\bfh)-\widehat{\bfF}(\bfx,s,\widetilde{\bfh})
\right) \cdot \bfw \;{\rm d}Q_t
\\
\leq c_1 \delta \int_0^t
\|\bfh-\widetilde{\bfh}\|^2_{L^2(\Omega)^m}{\rm d}s + c_2 C(\delta)
\int_0^t \|\bfw\|^{2}_{W^{1,2}(\Omega)^m} {\rm d}s.
\end{multline}
Further
\begin{multline}\label{eq306}
\int_0^t\!\!\!\int_{\Gamma_2}\!\!\! \left(
\widehat{\bfg}(\bfx,s,\bfh)-\widehat{\bfg}(\bfx,s,\widetilde{\bfh})
\right)\cdot \bfw \;{\rm d}S_t
\leq c_L \!\!\! \int_0^t \|\bfh-\widetilde{\bfh}\|_{L^2(\Gamma_2)^m}
\,\|\bfw\|_{L^2(\Gamma_2)^m} {\rm d}s
\\
\leq c_1 \delta \int_0^t
\|\bfh-\widetilde{\bfh}\|^2_{W^{1,2}(\Omega)^m}{\rm d}s + c_2
C(\delta) \int_0^t \|\bfw\|^2_{W^{1,2}(\Omega)^m} {\rm d}s .
\end{multline}
Hence, we can rewrite \eqref{eq303} using the above estimates
together with equations \eqref{eq301} and \eqref{eq302} to obtain
\begin{multline}\label{eq307}
\frac{1}{2}\int_{\Omega} |\nabla \bfw(t)|^2 \;{\rm d}\bfx +
\int_{Q_t}\bfR\cdot (\bfh-\widetilde{\bfh})
 \;{\rm d}S_t
 \\
\leq c_1 \delta \int_0^t
\|\bfh-\widetilde{\bfh}\|^2_{W^{1,2}(\Omega)^m}{\rm d}s + c_2
C(\delta) \int_0^t \|\bfw\|^2_{W^{1,2}(\Omega)^m} {\rm d}s .
\end{multline}
Note that if $\widetilde{\bfh}\neq\bfh$ then the second term on the
left in \eqref{eq307} is positive. Hence, provided we select
$\delta$ sufficiently small, we obtain the integral inequality
\begin{displaymath}
\|\bfw(t)\|^2_{\mathbb{V}} \leq c \int_0^t
\|\bfw(s)\|^2_{\mathbb{V}} \;{\rm d}s,
\end{displaymath}
for a.e. $0 \leq t \leq T$,
from which we obtain, using the technique of Gronwall's lemma,
$\bfw={\bf0}$.
Hence $\widetilde{\bfu}=\bfu$ and the uniqueness follows (recall
that the Kirchhoff transformation $\bfh={\mathscr{K}}(\bfu)$ is a
Lipschitz continuous one-to-one mapping).
\end{proof}

\begin{rem}
All results in our paper remain valid if one assumes the
nonhomogeneous Dirichlet boundary condition $\bfu=\bfu^D$ on
$\Gamma_1 \times (0,T)$, where $\bfu^D \in L^2(0,T;\mathbb{V})\cap
L^{\infty}(Q_T)$.


\end{rem}



\subsection*{Acknowledgment}
This outcome has been achieved with the financial support of the
Ministry of Education, Youth and Sports of the Czech Republic,
project No. 1M0579, within activities of the CIDEAS research centre.
Additional support from the grant 201/09/1544 provided by the Czech
Science Foundation is greatly acknowledged.


\begin{thebibliography}{}

\bibitem{AdamsFournier1992}
A. Adams and J. F. Fournier, \textit{Sobolev spaces}. Pure and
Applied Mathematics \textbf{140}, Academic Press, 1992.

\bibitem{AltLuckhaus1983}
H.~W. Alt and S. Luckhaus, \textit{Quasilinear elliptic-parabolic
differential equations}. Math. Z. \textbf{183} (1983), 311--341.

\bibitem{Diaz1996}
J.I. D\'{i}az, J.F. Padial, \textit{Uniqueness and existence of
solutions in the ${\rm BV}_t(Q)$ space to a doubly nonlinear
parabolic problem}. Publ. Math. Barc. \textbf{40} (1996), 527--560.

\bibitem{Eden1990}
A.~Eden, B.~Michaux and J.~Rakotoson, \textit{Semi-Discretized
Nonlinear Evolution Equations as Discrete Dynamical Systems and
Error Analysis}. Indiana Univ. Math. J. \textbf{39} (1990),
737-–783.

\bibitem{Eden1994}
A. Eden and J. Rakotoson, \textit{Exponential attractors for a
doubly nonlinear equation}.  J. Math. Anal. Appl. \textbf{185}
(1994),  no. 2, 321--339.

\bibitem{hachimi2001}
H. El Ouardi and A. El Hachimi, \textit{Existence and Attractors of
Solutions for Nonlinear Parabolic Systems}.  Electron. J. Qual.
Theory Differ. Equ. No. 5 (2001), 1--16.

\bibitem{hachimi2006}
H. El Ouardi and A. El Hachimi,  \textit{Attractors for a class of
doubly nonlinear parabolic systems}. Electron. J. Qual. Theory
Differ. Equ. No. 1 (2006), 1–-15.

\bibitem{ouardi2007}
H. El Ouardi, \textit{On the finite dimension of attractors of
doubly nonlinear parabolic systems with l-trajectories}. Archivum
Mathematicum, vol. 43 (2007), issue 4, 289--303

\bibitem{FiloKacur1995}
J. Filo and J. Ka\v{c}ur, \textit{Local existence of general
nonlinear parabolic systems}. Nonlinear Anal. \textbf{24} (1995),
1597--1618.

\bibitem{gerkegenuchten}
H.H. Gerke and M.T. Van~Genuchten, A dual-porosity model for
simulating the preferential movement of water and solutes in
structured porous media.
 Water Resources Research, \textbf{29} (1993), 305--319.

\bibitem{ivanov}
A.V. Ivanov, J.F. Rodrigues, \textit{Existence and uniqueness of a
weak solution to the initial mixed boundary value problem for
quasilinear elliptic-parabolic equations}. J. Math. Sci.
\textbf{109} (2002), no. 5, 1851--1866.

\bibitem{Kacur1997}
J.~Ka\v{c}ur, \textit{Solution of degenerate parabolic systems by
relaxation schemes}. Nonlinear Anal. \textbf{30} (1997), 4629--4636.

\bibitem{Kacur1990a}
J.~Ka\v{c}ur,  \textit{On a solution of degenerate
elliptic-parabolic systems in Orlicz-Sobolev spaces. I}.
 Math. Z. \textbf{203} (1990), 153--171.

\bibitem{Kacur1990b}
J.~Ka\v{c}ur, \textit{On a solution of degenerate elliptic-parabolic
systems in Orlicz-Sobolev spaces. II}. Math. Z. \textbf{203} (1990),
569--579.

\bibitem{KufFucJoh1977}
A.~Kufner, O.~John and S.~Fu\v{c}\'{i}k, \textit{Function Spaces}.
Academia, 1977.

\bibitem{KuKi1997}
H.~M. K\"{u}nzel and K. Kiessl, \textit{Calculation of heat and
moisture transfer in exposed building components}. Int. J. Heat
Mass. Transfer. \textbf{40} (1997), 159--167.

\bibitem{necas1967}
J.~Ne\v{c}as, \textit{Les methodes directes en theorie des equations
elliptiques}. Academia, 1967.

\bibitem{zeman1991}
J.~Zeman, \textit{On existence of the weak solution for nonlinear
diffusion equation}. Appl. Math.  \textbf{36} (1991), 9--20.

\end{thebibliography}
\end{document}